\theoremstyle{plain}
\newtheorem{theorem}{Theorem}[section]
\newtheorem{corollary}[theorem]{Corollary}
\newtheorem{lemma}[theorem]{Lemma}
\theoremstyle{definition}
\newtheorem{definition}[theorem]{Definition}
\newtheorem{remark}[theorem]{Remark}
\newtheorem{problem}[theorem]{Problem}
\newcommand{\Aut}{\mathop{\mathrm{Aut}}}
\newcommand{\stab}{\mathop{\mathrm{stab}}}
\newcommand{\fix}{\mathop{\mathrm{fix}}}
\newcommand{\supp}{\mathop{\mathrm{supp}}}
\let\OLDthebibliography\thebibliography
\renewcommand\thebibliography[1]{
	\OLDthebibliography{#1}
	\setlength{\parskip}{0pt}
	\setlength{\itemsep}{0pt plus 0.3ex}
}
\begin{document}
	\setcounter{Maxaffil}{3}
	\title{On minimal graphs, fixing sets and base size sets for hamiltonian groups}
	\author[ ]{Kirti Sahu\thanks{kirtisahu157@gmail.com}}
	\author[ ]{Ranjit Mehatari\thanks{ranjitmehatari@gmail.com, mehatarir@nitrkl.ac.in}}
	\affil[ ]{Department of Mathematics,}
	\affil[ ]{National Institute of Technology Rourkela,}
	\affil[ ]{Rourkela - 769008, India}
	\maketitle
	\begin{abstract}
		A finite non-abelian group $H$ is hamiltonian if all of its subgroups are normal. We compute the minimal orders of graphs having a hamiltonian group as their automorphism group. Later, we determine the fixing sets and base size sets corresponding to finite hamiltonian groups. As a consequence, we obtain that if $H$ is a hamiltonian group, then the base size set of $H$ is equal to its fixing set.
	\end{abstract}
	
	\textbf{AMS Subject Classification (2020):} 20B25, 05C25.\\
	\textbf{Keywords:} Automorphism group, hamiltonian group, vertex-minimal graph, fixing number, fixing set, base size set.
	
	\section{Introduction}
	Suppose $\Gamma$ be a simple finite graph having vertex set $V(\Gamma)$ and edge set $E(\Gamma)$. Define an automorphism of graph $\Gamma$ as an adjacency-preserving permutation from its vertex set to itself. Then the set consisting of all automorphisms of $\Gamma$, represented by $\Aut \Gamma$, forms a group known as automorphism group of $\Gamma$. In 1936, K\"{o}nig asked a question in his book: when is it possible for a given group be expressed as the automorphism group of some graph? In response to this question, Frucht \cite{rf} proved that for every finite group $G$, there exist a graph $\Gamma$ such that the automorphism group of $\Gamma$ is isomorphic to $G$. Later, he proved that if $\Gamma$ is a cubic graph \cite{rf1} then the solution to the above question is also possible. In 1957, Sabidussi \cite{gs} proved the question with some extra properties of graphs, such as prescribed regularity, prescribed chromatic number, or vertex-connectivity. However, for a given group $G$, we have infinite number of graphs having group $G$ as their full automorphism group. Considering $G$ to be a finite group, we  define $\alpha(G)$ as the minimum number of vertices among all graphs $\Gamma$ such that $\Aut \Gamma \cong G$. Babai proved in \cite{lb} that if $G$ is a finite group other than $C_3$, $C_4$, and $C_5$, then $\alpha(G)\leq 2|G|$. The exact value of $\alpha(G)$ for various groups has been determined completely: finite abelian groups \cite{wca}, symmetric groups \cite{lv}, alternating groups $A_{n}$ for degree $n \geq 13$ \cite{mw}, hyperoctahedral groups \cite{adg}, dihedral groups \cite{lsc,lc,gh,dj}, generalized quaternion groups \cite{lscg}, quasi-abelian groups \cite{lz}, quasi-dihedral groups \cite{lz}, and modular $p$-groups \cite{sm}.\medskip
	
	Throughout this paper, we consider a family of groups called hamiltonian groups. A non-abelian group $H$ is said to be hamiltonian if every subgroup of $H$ is normal. A nice characterization of hamiltonian groups is given as follows.
	\begin{theorem} \cite{mha}
		A hamiltonian group is the direct product of a quaternion group with an abelian group in which every element is of finite odd order and an abelian group of exponent two.
	\end{theorem}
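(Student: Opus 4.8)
The plan is to prove both directions of the characterization; the substantive one is that every hamiltonian group has the stated form. For the converse I would check directly that $G_0:=Q_8\times A\times B$ is hamiltonian whenever $A$ is abelian with every element of finite odd order and $B$ is an elementary abelian $2$-group. It is non-abelian, and for a subgroup $U\le G_0$, an element $u=(q,t)\in U$ and any $g=(q',t')\in G_0$ we have $g^{-1}ug=(q'^{-1}qq',\,t)$ since $A\times B$ is central; as every conjugacy class of $Q_8$ has the shape $\{q,q^{-1}\}$, this equals $u$ or $u\cdot(q^{-2},1)$, and $q^{-2}$ is either $1$ or the central involution $z$ of $Q_8$. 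In the latter case $q$ has order $4$ and a suitable power of $u=(q,t)$ equals $(z,1)$ --- one takes $u^{2}=(z,t^{2})$ and raises to the odd part of $|t|$, using that $A\times B$ has odd torsion times $2$-torsion --- so $(z,1)\in U$; hence $g^{-1}ug\in U$ and $U\trianglelefteq G_0$.

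Now let $H$ be hamiltonian; the goal is $H\cong Q_8\times B\times D$ with $B$ elementary abelian of exponent $2$ and $D$ abelian with all elements of finite odd order. Note first that every subgroup of a Dedekind group is again Dedekind. Pick non-commuting $a,b\in H$ and set $c:=[a,b]\neq1$. Normality of $\langle a\rangle$ and $\langle b\rangle$ gives $b^{-1}ab=a^{1+e}$ and $a^{-1}ba=b^{1+f}$ for integers $e,f$, whence $c=a^{e}\in\langle a\rangle$ and $c=b^{-f}\in\langle b\rangle$; thus $c$ commutes with $a$ and with $b$, so $c$ is central in $K:=\langle a,b\rangle$ and $K$ is nilpotent of class $2$ with $K'=\langle c\rangle$. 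Computing $a^{b^{2}}$ in two ways gives $a^{(1+e)^{2}}=a^{1+2e}$, i.e.\ $|a|\mid e^{2}$, so $a$ and (likewise) $b$ have finite order and $K$ is finite. The crux is the claim that a finite $2$-generated hamiltonian group of prime-power order must be $\cong Q_8$. Granting it: $K$ is finite nilpotent, so each of its Sylow subgroups is $2$-generated (project $K$ onto it), and the prime at which the component of $[a,b]$ is nontrivial yields a non-abelian such subgroup, forced to be a copy $Q\cong Q_8$ of the quaternion group inside $H$.

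Given $Q\cong Q_8$ inside $H$, I would globalize. The three subgroups of order $4$ in $Q$ are each normal in $H$, so the conjugation homomorphism $H\to\Aut(Q_8)\cong S_4$ has image in the stabilizer of all three, which is precisely $\mathrm{Inn}(Q_8)$; hence $H=Q\,C_H(Q)$ with $Q\cap C_H(Q)=Z(Q)$, a central product $H=Q\circ C$ where $C:=C_H(Q)$. Then $C$ is abelian: otherwise $C$ contains a copy of $Q_8$ centralizing $Q$, so $H$ contains the extraspecial group $Q_8\circ Q_8$ of order $32$, which has a non-normal subgroup $\langle t\rangle$ for any non-central involution $t$ --- contradiction. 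Similarly $C$ is torsion: an element $x\in C$ of infinite order, together with standard generators $a,b$ of $Q$, produces the subgroup $\langle xab\rangle$, whose $b$-conjugate is not contained in it, again contradicting that $H$ is Dedekind.

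Finally I would assemble the decomposition. Write the abelian torsion group $C$ as $C=C_2\times D$, its $2$-part times its odd part; $D$ is abelian with all elements of odd order and is a direct factor of $H$ by coprimality. The central involution $z\in Q$ lies in $C_2$, and $C_2$ has exponent $\le 2$: an $x\in C_2$ of order $\ge 4$ yields an order-$4$ element $y=x^{|x|/4}$ which, together with an order-$4$ element $a\in Q$ satisfying $a^{2}=z$, generates inside $H$ a copy of the dihedral group $D_8$ of order $8$ (when $y^{2}=z$, through the involution $ya^{-1}$) or of $C_4\times C_4$ (when $y^{2}\neq z$), and each of these has a subgroup that is not normal in $H$. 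So $C_2=\langle z\rangle\times B$ with $B$ elementary abelian; since $z\in Q$ we get $Q\circ C_2=Q\,B$ with $Q\cap B=1$, and $B$ is central hence normal, giving $H=Q\times B\times D\cong Q_8\times B\times D$ as required. The single genuine obstacle is the prime-power claim in the second paragraph --- equivalently, that $[a,b]^{2}=1$ and $|a|\mid 4$ in a $2$-generated hamiltonian $p$-group --- which I would settle by elementary arithmetic with the relations above: one shows the prime is $2$ and, using normality of subgroups such as $\langle a^{2}\rangle$ and $\langle a^{2}b\rangle$, rules out $|a|\ge 8$ and $|c|\ge 4$ by exhibiting an explicit non-normal cyclic subgroup in each bad configuration, after which $a^{4}=1$, $a^{2}=b^{2}=c$, $b^{-1}ab=a^{-1}$ can be read off.
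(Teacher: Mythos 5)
The paper does not prove this statement at all: it is the classical Dedekind--Baer classification of hamiltonian groups, quoted from Hall's \emph{The Theory of Groups} with a citation and no argument, so there is no in-paper proof to compare yours against. Measured against the standard textbook proof, your outline follows essentially the same route, and the steps you do carry out are sound: the converse works because conjugation in $Q_8\times A\times B$ sends $(q,t)$ to itself or to $(q,t)\cdot(z,1)$ and, when $q$ has order $4$, $(z,1)$ is an odd power of $(q,t)^2$, hence lies in $\langle(q,t)\rangle$; in the forward direction, two non-commuting elements generate a finite class-$2$ group (via $c=[a,b]=a^e=b^{-f}$ and $a^{e^2}=1$), the three cyclic subgroups of order $4$ in a copy $Q\cong Q_8$ being normal forces the image of $H\to\Aut(Q)$ into $\mathrm{Inn}(Q)$ and hence $H=Q\,C_H(Q)$, and the extraspecial-group, infinite-order, $D_8$ and $C_4\times C_4$ obstructions do each produce a non-normal cyclic subgroup, so $C_H(Q)$ is abelian, torsion, with $2$-part of exponent $2$. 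The final assembly $H\cong Q_8\times B\times D$ is correct.

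The one place where the proposal is not yet a proof is the step you yourself flag as ``the single genuine obstacle'': the claim that a finite $2$-generated hamiltonian group of prime-power order is isomorphic to $Q_8$. This is the computational core of the entire theorem --- one must show that no hamiltonian $p$-group exists for odd $p$, and that in the $2$-group case the relations force $a^4=b^4=1$, $a^2=b^2=[a,b]$ and $b^{-1}ab=a^{-1}$ --- and your text only gestures at it (``elementary arithmetic with the relations,'' ``exhibiting an explicit non-normal cyclic subgroup in each bad configuration''). The plan is the right one and does succeed, but nothing in it is verified as written, and without this lemma the existence of the subgroup $Q\cong Q_8$ on which everything downstream depends is not established. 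If you carry out that lemma in full, the remainder of your argument assembles into a complete and correct proof of the cited theorem; as it stands, the hardest part has been named rather than proved.
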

	Arlinghaus \cite{wca} evaluated the value of $\alpha(A)$, for $A$ to be a finite abelian group. His results are too lengthy to recall here. We use a few of these results to determine $\alpha(H)$, where $H$ is a hamiltonian group.

	Let $G$ be a group that acts on set $X$, and $x\in X$. Define $\stab(x) = \{g \in G : g(x) = x\}$ as the stabilizer of $x$. The set-wise stabilizer of set $Y \subseteq X$ is $\stab(Y) = \{g \in G : g(y) = y, \forall y \in Y\}$. For a graph $\Gamma$, the fixing number (equivalently, the determining number defined in \cite{ldb}) is defined as the minimum cardinality of vertex subset $S$ of $V(\Gamma)$ such that $\stab(S)$ is trivial, and $S$ is called a fixing set of $\Gamma$. The study of fixing sets of different graphs families have been studied by various authors. While the fixing numbers for certain cases of Kneser graphs have been studied in \cite{ldb,cggmp} and for Fullerene graphs in \cite{kam}, the fixing numbers for trees and complete graphs have been characterized in \cite{fhed}. The concept of fixing the number of graphs has been extended to finite groups by Gibbons and Laison \cite{jdl}. For a finite group $G$, the fixing set ($\fix(G)$) is defined as the set consisting all fixing numbers of graphs having group $G$ as their full automorphism group. Researchers have completely determined the fixing sets of finite abelian groups \cite{jdl}, generalized quaternion groups \cite{lklgchh}, dihedral groups \cite{lkla}, quasi-abelian groups \cite{ummul}, quasi-dihedral groups \cite{ummul}, and modular $p$-groups \cite{sm}. For finite abelian groups we have the following theorem.
	\begin{theorem}\cite{jdl}
		\label{fixing_abelian}
		Suppose for a finite abelian group $A$, the number of elementary divisors be $d$. Then $\fix(A)=\{1,2,\ldots,d\}$.
	\end{theorem}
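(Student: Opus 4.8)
The plan is to establish the two inclusions $\fix(A)\subseteq\{1,\dots,d\}$ and $\{1,\dots,d\}\subseteq\fix(A)$ separately. Write $A\cong C_{q_1}\times\cdots\times C_{q_d}$ with each $q_l$ a prime power, so that $d$ is the number of elementary divisors (and $d\ge 1$, as $A$ is nontrivial). Observe first that every element of $\fix(A)$ is at least $1$, since a graph $\Gamma$ with $\Aut\Gamma\cong A\ne\{e\}$ cannot have fixing number $0$.

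For the upper bound, fix an arbitrary $\Gamma$ with $\Aut\Gamma\cong A$ and identify $A$ with $\Aut\Gamma$, so the action of $A$ on $V(\Gamma)$ is faithful. Let $P_p$ be the Sylow $p$-subgroup of $A$ and $\Sigma_p\le P_p$ its socle, an elementary abelian $p$-group of rank $r_p$; recall $\sum_p r_p=d$. I would build a fixing set greedily: put $M_0=A$, and, having chosen $v_1,\dots,v_k$ with $M_k:=\bigcap_{i\le k}\stab(v_i)\ne\{e\}$, pick a prime $p$ with $M_k\cap P_p\ne\{e\}$ and an element $x$ of order $p$ in $M_k\cap\Sigma_p$; by faithfulness $x$ moves some vertex $v_{k+1}$, and then $M_{k+1}:=M_k\cap\stab(v_{k+1})$ satisfies $M_{k+1}\cap\Sigma_p\subsetneq M_k\cap\Sigma_p$ (as $x$ lies in the right-hand side but not the left) while $M_{k+1}\cap\Sigma_q\subseteq M_k\cap\Sigma_q$ for every other prime $q$. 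Hence the quantity $\sum_p\dim_{\mathbb F_p}(M_k\cap\Sigma_p)$, which equals $d$ when $k=0$ and vanishes exactly when $M_k=\{e\}$, strictly decreases at every step, so after at most $d$ steps $M_k=\{e\}$ and $\{v_1,\dots,v_k\}$ is a fixing set of size at most $d$. Thus $\fix(A)\subseteq\{1,\dots,d\}$.

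For the realizations, fix $i$ with $1\le i\le d$, partition the multiset $\{q_1,\dots,q_d\}$ into $i$ nonempty blocks, and for the $j$-th block let $H_j$ be the product of the corresponding cyclic groups -- a nontrivial finite abelian group. By Frucht's construction \cite{rf}, for each $j$ there is a connected graph $\Delta_j$ with $\Aut\Delta_j\cong H_j$ in which the vertices arising from group elements form a free orbit; consequently $\Delta_j$ has fixing number $1$. Since the construction leaves the order of $\Delta_j$ free to be arbitrarily large, I would choose the $\Delta_j$ of pairwise distinct orders, hence pairwise non-isomorphic. Then $\Gamma_i:=\Delta_1\sqcup\cdots\sqcup\Delta_i$ has $\Aut\Gamma_i\cong H_1\times\cdots\times H_i\cong C_{q_1}\times\cdots\times C_{q_d}\cong A$, and, because in a disjoint union of pairwise non-isomorphic connected graphs a vertex set is a fixing set precisely when its restriction to each component is, the fixing number of $\Gamma_i$ equals $i$. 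Hence $i\in\fix(A)$, and with the upper bound this yields $\fix(A)=\{1,\dots,d\}$.

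I expect the realization step to be the main obstacle, and within it the structural bookkeeping rather than any hard inequality: one must (i) verify that the Frucht-type graph for $H_j$ genuinely has full automorphism group $H_j$ together with a genuinely free orbit, so that its fixing number is $1$ and not larger, and (ii) keep the components $\Delta_j$ pairwise non-isomorphic while preserving these properties, which is exactly what forces $\Aut(\Delta_1\sqcup\cdots\sqcup\Delta_i)$ to be the direct product $\prod_j H_j$ rather than a larger group built with wreath products. It will help to isolate as a short lemma the facts that, for a disjoint union of pairwise non-isomorphic connected graphs, the automorphism group is the direct product of, and the fixing number is the sum of the fixing numbers of, the components; granting that lemma, both inclusions are short.
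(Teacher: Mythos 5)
The paper does not prove this statement: it is quoted verbatim from Gibbons and Laison \cite{jdl}, so there is no in-paper argument to compare yours against. Judged on its own, your proof is correct and is essentially the standard route. Your upper bound is a clean version of the base-size argument: since any nontrivial $M_k\cap P_p$ meets the socle $\Sigma_p$ nontrivially, each greedily chosen vertex strictly drops $\sum_p\dim_{\mathbb F_p}(M_k\cap\Sigma_p)$, which starts at $d$ and vanishes only when $M_k$ is trivial; this correctly bounds the fixing number of \emph{every} graph with automorphism group $A$ by $d$. Your realization step (partition the elementary divisors into $i$ blocks, take pairwise non-isomorphic connected Frucht graphs, one per block, each with fixing number $1$, and use that for pairwise non-isomorphic connected components the automorphism group is the direct product and the fixing number is additive) is exactly the mechanism this paper itself leans on via Lemma \ref{fixing_lm1} and Lemma \ref{ham_lm2}, with the grouping-into-blocks trick needed to hit every value strictly between $1$ and $d$. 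The only points that genuinely require the care you flag are the ones you already identify: keeping the components non-isomorphic (to avoid wreath-product automorphisms) and verifying the free orbit in the Frucht graph, both of which are standard.
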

	In \cite{jdl}, Gibbons and Laison proved the following lemma:
	\begin{lemma}
		\cite{jdl}\label{fixing_lm1}
		Suppose $G_{1}$ and $G_{2}$ are two non-trivial finite groups, then the set $\fix(G_{1}) + \fix(G_{2}) \subseteq \fix(G_{1} \times G_{2})$, where the sum $\fix(G_{1}) + \fix(G_{2})$ is defined as $\{a + b 
		\mid a \in \fix(G_{1}), b \in \fix(G_{2})\}$.
	\end{lemma}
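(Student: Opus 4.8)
The goal is to show that every integer of the form $a+b$ with $a\in\fix(G_1)$ and $b\in\fix(G_2)$ is the fixing number of some graph whose automorphism group is $G_1\times G_2$; since $\fix(G_1)+\fix(G_2)=\{a+b: a\in\fix(G_1),\,b\in\fix(G_2)\}$, this gives the inclusion. I would build such a graph as a disjoint union. Fix graphs $\Gamma_1,\Gamma_2$ with $\Aut\Gamma_i\cong G_i$, $\fix(\Gamma_1)=a$, $\fix(\Gamma_2)=b$, and suppose for the moment that no connected component of $\Gamma_1$ is isomorphic to a connected component of $\Gamma_2$. Put $\Gamma=\Gamma_1\sqcup\Gamma_2$. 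Since an automorphism of a disjoint union permutes isomorphism types of components, this assumption forces every automorphism of $\Gamma$ to stabilize $V(\Gamma_1)$ and $V(\Gamma_2)$ set-wise, so that $\Aut\Gamma\cong\Aut\Gamma_1\times\Aut\Gamma_2\cong G_1\times G_2$ with the product acting componentwise. For this action a pair $(\sigma_1,\sigma_2)$ fixes a set $S\subseteq V(\Gamma)$ pointwise iff $\sigma_1$ fixes $S\cap V(\Gamma_1)$ pointwise and $\sigma_2$ fixes $S\cap V(\Gamma_2)$ pointwise, i.e. $\stab_\Gamma(S)=\stab_{\Gamma_1}(S\cap V(\Gamma_1))\times\stab_{\Gamma_2}(S\cap V(\Gamma_2))$. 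Hence $S$ is a fixing set of $\Gamma$ iff $S\cap V(\Gamma_1)$ and $S\cap V(\Gamma_2)$ are fixing sets of $\Gamma_1$ and $\Gamma_2$; since these two pieces may be chosen independently and $|S|=|S\cap V(\Gamma_1)|+|S\cap V(\Gamma_2)|$, we get $\fix(\Gamma)=\fix(\Gamma_1)+\fix(\Gamma_2)=a+b$, and so $a+b\in\fix(G_1\times G_2)$.

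It remains to justify that $\Gamma_1$ and $\Gamma_2$ may be taken with no common component, and this is the only real work. I would arrange it with a gadget: attach to every vertex of $\Gamma_i$ a pendant path of length $\ell_i$, obtaining $\Gamma_i'$, where $\ell_2$ is large and $\ell_1$ is then chosen large enough (compared with $|V(\Gamma_2')|$) that every connected component of $\Gamma_1'$ has strictly more vertices than every connected component of $\Gamma_2'$; then $\Gamma_1'$ and $\Gamma_2'$ trivially share no component. One checks that for $\ell_i$ large this operation (i) leaves the automorphism group unchanged — the original vertices are distinguishable from the new pendant-path vertices and the attached paths are rigid, and the operation does not merge two components of a single $\Gamma_i$, since each component $C$ is recovered from its image $C'$ by deleting the attached paths — so $\Aut\Gamma_i'\cong\Aut\Gamma_i\cong G_i$; and (ii) leaves the fixing number unchanged, because a vertex on the pendant path hanging at $v$ is fixed by an automorphism exactly when $v$ is, so $S\subseteq V(\Gamma_i')$ is a fixing set iff the set obtained from $S$ by replacing each vertex by the vertex of $\Gamma_i$ it hangs from is a fixing set of $\Gamma_i$, giving $\fix(\Gamma_i')=\fix(\Gamma_i)$. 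Replacing each $\Gamma_i$ by $\Gamma_i'$, the argument of the first paragraph applies verbatim.

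The main obstacle is precisely the verification in (i): that attaching pendant paths creates no new automorphisms. The delicate cases are small components — paths, stars, and (where the operation must be adjusted or such components handled by hand) isolated vertices — in which attaching pendants could a priori introduce symmetry; a clean way to avoid this bookkeeping is to invoke a standard automorphism-preserving operation with the same effect, such as an iterated corona $\Gamma\circ K_1$ or subdividing every edge of $\Gamma$ a fixed number of times, and to observe that it preserves the fixing number for the same reason as above. Everything else — the structure of $\Aut$ and of pointwise set-stabilizers under a disjoint union with no shared component — is elementary and is the routine part of the argument.
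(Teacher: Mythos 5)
The paper does not actually prove this lemma---it is quoted from Gibbons and Laison \cite{jdl}---so there is no in-paper argument to compare against. Your first paragraph is the standard (and correct) core of the known proof: for a disjoint union with no component of $\Gamma_1$ isomorphic to a component of $\Gamma_2$, the automorphism group is the direct product acting componentwise, the pointwise stabilizer of $S$ factors as $\stab_{\Gamma_1}(S\cap V(\Gamma_1))\times\stab_{\Gamma_2}(S\cap V(\Gamma_2))$, and the fixing number is therefore additive. That part is sound.

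The gap is in the step you yourself flag as ``the only real work,'' and your proposed remedies do not close it. Attaching pendant paths can change the automorphism group (an isolated vertex becomes a path $P_{\ell+1}$ with automorphism group $C_2$), and the ``clean'' fallback of subdividing every edge is simply false as an automorphism-preserving operation: subdividing $C_3$ once yields $C_6$, turning $S_3$ into the dihedral group of order $12$. So as written, the reduction to the no-common-component case is not established. The standard repair is cheaper than a gadget: a graph and its complement have the same automorphism group as permutation groups, hence the same fixing number, and a graph and its complement cannot both be disconnected; so you may assume each $\Gamma_i$ is connected (and has at least two vertices, since $G_i$ is non-trivial). For two connected graphs the only obstruction to $\Aut(\Gamma_1\sqcup\Gamma_2)\cong G_1\times G_2$ is $\Gamma_1\cong\Gamma_2$, and in that single case replacing $\Gamma_2$ by the corona $\Gamma_2\circ K_1$ works: since $\Gamma_2$ is connected on at least two vertices, every base vertex has degree at least $2$ in the corona while the new vertices have degree $1$, so the automorphism group and (by your own projection argument) the fixing number are unchanged, and the two components now have different orders. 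With that substitution your first paragraph completes the proof.
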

	Further,they posed the following open problem:	\begin{problem}\cite{jdl}
		\label{fix_op1}
		Let $G_1$ and $G_2$ be any two finite non-trivial groups. Is it true that $\fix(G_1) +
		\fix(G_2) = \fix(G_1 \times G_2)\setminus\{1\}$?
	\end{problem}
	The above open problem is certainly true if both $G_1$ and $G_2$ are abelian. We solve Problem \ref{fix_op1} partially by considering $\fix(G_1)=\{1,2,\ldots,d\}$ and $\fix(G_2)=\{1\}$. As a consequence of the above result, we determine the fixing set for hamiltonian groups. 
	
	Let $Sym(X)$ denote the symmetric group on the set $X$. For a permutation representation $\tau : G \to Sym(X)$, base of $\tau$ is a subset $B \subseteq X$ such that the $\stab(B) = \{g \in G : g(b) = b, \forall b \in B\}$ is trivial. The base size $b_{\tau}(G)$ is defined as the cardinality of smallest base of $\tau$. 
	 
	 	\begin{definition}
	 	The base size set is defined as the set of all base sizes of all faithful representations of $G$ on finite sets, that is, $\mathcal{B}(G) = \{ b_{\tau}(G) : \tau \in \mathcal{T}(G)\}$, where $\mathcal{T}(G)$ is set of all finite faithful representations of $G$.
	 \end{definition}
	 
	 	Suppose $G$ be a finite group acting faithfully on a set $X$ with representation $\tau$. Let $B = \{b_{1}, b_{2}, \ldots,b_{n}\}$ be the minimal base of $\tau$. Then we have a chain of subgroup $G > \stab(b_{1}) > \stab(b_{1},b_{2}) > \ldots > 1$. Define the length of $G$ as the size of longest chain of subgroups of $G$ (not counting 1). Then in \cite{cameron} it has been proved that the length of $G$ is at most the number of prime factors of $|G|$, with counting multiplicities. We have following result.
	 \begin{lemma}
	 	\label{base_len}
	 Let $G$ be a finite group of length $n$. Then $\fix(G) \subseteq \mathcal{B}(G) \subseteq \{1,2,\ldots,n\}$.
	 \end{lemma}
	 Above lemma shows that every finite group $G$ has finite fixing set and finite base size set. In $\cite{laison}$, Laison \emph{et al.} compared the fixing sets and base size sets of finite abelian groups and dihedral groups of order $2p^{k}$, $4p^{k}$ and $2pq$, where $p,q$ are odd primes. In this article, we examine the relation between the base size set and the fixing set of a hamiltonian group. Our results provide a partial answer to the open question posed in \cite{laison}: given the base size sets of two groups, what is the base size set of their direct product?

	The structure of this article is as follows: In Section 2, we discuss several useful results and the notions that we use later on to prove our main results. In Section 3, we construct vertex-minimal graphs for a finite hamiltonian group $H$ and determine $\alpha(H)$. In Section 4, we provide a partial solution to Problem \ref{fix_op1} and compute $\fix(H)$. Finally in Section 5, we establish the relation between base size set and fixing set of hamiltonian group.

	%%%%%%%%%%%%%%%%%%%%%%%%%%%%%%%%%%%%%%%%%%%%%%%%%%%%%%%%%%%
	%%%%%%%%%%%%%%%%%%%%%%%%%%%%%%%%%%%%%%%%%%%%%%%%%%%%%%%%%%%%	
	\section{Preliminaries}

	This section presents several key concepts and preliminary results that serve as essential tools for proving the main results of this paper. Throughout this paper, all graphs and grops considered are assumed to be finite, unless explicitly stated otherwise.  
	
	This paper is devoted to the computation of $\alpha(H)$. Throughout the work, we consider $H$ as a permutation group. Let $G$ be a permutation group acting on a set $X$ with $n$ symbols, and $x\in X$. Then the set $O(x) = \{g(x) : g \in G\}$ is defined as the orbit of $x$. 
	
	Let $\Gamma$ be a graph with the vertex set $V(\Gamma)$ and the edge set $E(\Gamma)$. If $G$ is a subgroup of the symmetric group $S_{V(\Gamma)}$, then the Edge Orbit of an edge $[v_1,v_2]\in E(\Gamma)$ is defined as $$O\{v_{1},v_{2}\}= \Big\{[g(v_{1}),g(v_{2})] : g \in G\Big\}.$$
	
	Also, we define the support of a permutation $g \in G$ as $\supp(g) = \{x\in X : g(x) \neq x\}$. 
	
	%The quaternion group $Q_8$ is defined as $Q_8=\langle \sigma,\tau\mid \sigma^4=\tau^4=1, \sigma^2=\tau^2, \sigma\tau=\tau\sigma^{-1}\rangle$. 
	Let $n$ be positive integer with $n \geq 3$. Define the generalized quaternion group of order $2^{n}$ as,
	
	\begin{equation}
		\label{quaternion_equation}
	Q_{2^{n}} = \langle r,f : r^{2^{n-1}} = 1 = f^{4}, frf^{-1} = r^{-1}, r^{2^{n-2}} = f^{2} \rangle
	\end{equation}
	%\textcolor{blue}{$Q_{2^{n}} = \langle r,f : r^{2^{n-1}} = 1 = f^{4}, frf^{-1} = r^{-1}, r^{2^{n-2}} = f^{2} \rangle$.} 
	Observe that every element in $Q_{2^{n}} \setminus \langle r \rangle$ is of order four and $r^{2^{n-2}} = f^{2}$ is the only element of order $2$.
	The following lemma describes the subgroup representation of quaternion group $Q_{8}$ in symmetric group, which is as follows.
	\begin{lemma}
		\label{ham_lem1}
		For any integer $k < 8$, the quaternion group $Q_{8}$ cannot be embedded as a subgroup of the symmetric group $S_{k}$.
	\end{lemma}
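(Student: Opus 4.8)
The plan is to argue via the orbit structure of a hypothetical faithful permutation action of $Q_8$, exploiting the fact that $Q_8$ has a \emph{unique} element of order $2$. Write $z=\sigma^2=\tau^2$ for the central involution. First I would record the elementary group-theoretic fact that $z$ lies in every non-trivial subgroup of $Q_8$: any non-trivial subgroup has order $2$, $4$, or $8$, hence (being a $2$-group) contains an element of order $2$, and since $z$ is the only such element in $Q_8$, it must be that element. Equivalently, the only subgroups of $Q_8$ are $\{1\}$, the center $\langle z\rangle$, the three cyclic subgroups of order $4$, and $Q_8$ itself, and all but the first contain $z$.

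Next, suppose for contradiction that $Q_8$ embeds in $S_k$ for some $k<8$; equivalently, $Q_8$ acts faithfully on a set $X$ with $|X|=k\le 7$. Decompose $X$ into $Q_8$-orbits $X=O_1\sqcup\cdots\sqcup O_r$. By the orbit–stabilizer theorem each $|O_i|$ divides $|Q_8|=8$, so $|O_i|\in\{1,2,4,8\}$. Since $|X|\le 7$, no orbit can have size $8$, so $|O_i|\le 4$ for every $i$; consequently the stabilizer $\stab(x)$ of any point $x\in O_i$ has order $8/|O_i|\ge 2$, i.e.\ it is a non-trivial subgroup of $Q_8$. By the first step, $z\in\stab(x)$ for every $x\in X$.

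It follows that $z$ fixes every point of $X$, so $z$ lies in the kernel of the action; but $z\neq 1$, contradicting faithfulness. Hence no such $k<8$ exists, and $Q_8$ cannot be embedded in $S_k$ for $k<8$. (That $Q_8\hookrightarrow S_8$ does hold, via its regular representation, is not needed here but shows the bound is sharp.) I do not anticipate a genuine obstacle in this argument; the only point requiring a moment of care is the claim that the central involution sits inside every non-trivial subgroup, which is exactly where the special structure of $Q_8$—as opposed to, say, an elementary abelian $2$-group—is used, and it is what forces the jump all the way to degree $8$.
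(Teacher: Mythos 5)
Your proof is correct, but it takes a genuinely different route from the paper. The paper argues directly with cycle decompositions: after disposing of $k\le 3$ (an element of order $4$ needs at least $4$ points), it assumes an embedding into $S_k$ for $4\le k\le 7$, writes the generator $\sigma$ as a $4$-cycle times a cycle of length at most $2$, and uses the relation $\sigma^2=\tau^2$ together with the uniqueness of the involution to force $\tau$ to be a $4$-cycle on the same four points as $\sigma$, which contradicts non-commutativity. Your argument instead decomposes the hypothetical $Q_8$-set into orbits, observes that on fewer than $8$ points every orbit has size at most $4$, so every point stabilizer is a non-trivial subgroup and hence contains the central involution $z$, putting $z$ in the kernel and contradicting faithfulness. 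Both proofs ultimately rest on the same structural fact --- $Q_8$ has a unique involution, sitting in every non-trivial subgroup --- but your version is cleaner: it avoids the case analysis on cycle types (which in the paper is somewhat terse at the final step) and it generalizes immediately to show $\mu(Q_{2^n})=2^n$ for all generalized quaternion groups, and more broadly that any group with a unique minimal subgroup needs a regular orbit in any faithful action. The paper's approach, by contrast, stays concrete in $S_k$ and hands the reader explicit permutation generators, which it then reuses in Section 3.
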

	
	\begin{proof}
		Since $Q_{8}$ has elements of order $4$, it cannot be embedded into symmetric group $S_{k}$ with $k \leq 3$. Suppose $Q_{8}$ is isomorphic to a subgroup of $S_{k}$ where $4 \leq k \leq 7$. Then the cycle decomposition of a $4$-order element in $S_{k}$ is either a $4$-cycle or a product of a $4$-cycle and a $2$-cycle. Let the generators of $Q_{8}$ be $\sigma$ and $\tau$, with the cycle decomposition of $\sigma$ as $(a_{1}, a_{2}, a_{3}, a_{4})\sigma_{1}$, where $\sigma_{1}$ is a cycle of length at most $2$. The group $Q_{8}$ has 6 elements of order 4 and a unique involution. Thus, $\sigma^{2} = \tau^{2} = (a_{1}, a_{3})(a_{2}, a_{4}) \sigma_{1}^{2}$, therefore $\tau = (a_{1}, a_{i}, a_{3}, a_{j})$ for some $i,j \in  \{2,4\}$, which is a contradiction.
	\end{proof}
	
	Now, using Cayley's theorem, we conclude that 8 is the smallest positive integer such that $Q_8$ is isomorphic to a subgroup of $S_8$; and one can get the generators of $Q_8$ in $S_8$ as $\sigma=(1,2,3,4)(5,6,7,8)$, $\tau=(1,7,3,5)(2,6,4,8)$. 
	
	For a finite group $G$, let $\mu(G)$ denote the smallest positive integer $n$ for which $G$ is isomorphic to a subgroup of $S_n$. In 1971, Johnson \cite{jh} first considered the problem: what is the minimal permutation representation of a finite group? Later, Wright \cite{wd} considered the problem for direct products.
	
	Now for any finite group $G$, one can observe that $\mu(G)\leq\alpha(G)$, where the equality do not hold in general. In fact, there are permutation groups that are not an automorphism group of some graph, for example the subgroup $\{1, (12)(34),(13)(24),(14)(23)\}$ of $S_4$ is not an automorphism group of any graph. Also for the quaternion group $Q_8$, we have $$8=\mu(Q_8)<\alpha(Q_8)=16.$$
	
	Using, Theorem of \cite{wd}, we have the following conclusions
	\begin{itemize}
		\item
		$\mu(C_2^k)=2k$.
		\item
		$\mu(Q_8\times C_2^k)=8+2k$.
	\end{itemize}   
	
	Thus, for the elementary abelian 2-group $C_2^k$, we have $\mu(C_2^k)=\alpha(C_2^k)=2k$.

	Let $G$ be a group and $S$ be the generating set of $G$. With the vertex set $G$, the Cayley digraph $C(G,S)$ is defined as the directed and edge-labelled multigraph such that there is a directed edge from group element $g_{1}$ to $g_{2}$ labelled with the generator $s \in S$ if and only if $sg_{1} = g_{2}$. Now define the Frucht Graph $F(G,S)$ from $C(G,S)$ by replacing each directed and labelled edge with a graph gadget such that $\Aut(F(G,S)) = \Aut(C(G,S))$. The Frucht graph is discussed in detail in \cite{bw}. The following lemma follows from the exposition in \cite{bw}. 
	\begin{lemma}
		\label{ham_lm2}
		Let $G$ be a group, and $S$ be its generating set.
		Then $\Aut(C(G,S)) = G$ and $\Aut(F(G,S)) = G$. Furthermore, the fixing number of $C(G,S)$ or $F(G,S)$ is $1$.
	\end{lemma}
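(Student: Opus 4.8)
The plan is to verify the three claims by recalling the standard construction and its well-known properties, citing \cite{bw} for the routine verifications. First I would recall the construction of the Cayley digraph $C(G,S)$: its vertex set is $G$ and it carries a colored directed edge $g_1 \to sg_1$ for each $s \in S$. The key observation is that left multiplication by a fixed $h \in G$ sends the edge $g_1 \to sg_1$ to $hg_1 \to hsg_1 = s(hg_1)$, so it preserves every edge color class; hence the left-regular representation of $G$ embeds into $\Aut(C(G,S))$. Conversely, any color- and direction-preserving automorphism $\phi$ of $C(G,S)$ is determined by the image $\phi(1)$: once $\phi(1)$ is fixed, following the unique $s$-colored out-edges from $1$ forces $\phi(s) = s\,\phi(1)$, and since $S$ generates $G$, induction on word length in $S$ shows $\phi(g) = g\,\phi(1)$ for all $g$. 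Therefore $\Aut(C(G,S)) = G$ acting by left multiplication, and this action is regular, i.e. both transitive and free.

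Next I would pass from $C(G,S)$ to the Frucht graph $F(G,S)$. The gadget-replacement described in \cite{bw} is designed precisely so that it records the direction and the color of each arc while introducing no new symmetries; thus $\Aut(F(G,S)) \cong \Aut(C(G,S)) = G$, and under this isomorphism the $G$-action on the vertices of $F(G,S)$ that correspond to group elements is again the left-regular action (the gadget vertices are permuted accordingly but are determined by the endpoints of the arc they replace). This is the step I would be least inclined to prove from scratch — it is the content of the cited exposition — so I would simply invoke \cite{bw} for the identity $\Aut(F(G,S)) = \Aut(C(G,S))$ and for the fact that the correspondence respects the group action.

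Finally, for the fixing number: since the $G$-action on $C(G,S)$ (respectively on the "group-element" vertices of $F(G,S)$) is free, the stabilizer of any single such vertex $v$ is trivial, so $\{v\}$ is already a fixing set and the fixing number is at most $1$. It cannot be $0$, because $G$ is non-trivial (it has a generating set $S$, and the statement is applied with $G$ non-trivial), so the empty set is not a fixing set. Hence the fixing number equals $1$ in both cases. The only genuine subtlety to flag is the edge case $S = \emptyset$, which forces $G$ trivial; since the lemma is always used with a non-trivial $G$ and a bona fide generating set, I would either add the hypothesis that $G$ is non-trivial or note that the fixing number of a one-vertex graph is conventionally $0$, and this does not affect any later application.
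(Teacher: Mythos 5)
The paper gives no proof of this lemma at all --- it simply states that it ``follows from the exposition in \cite{bw}'' --- so your write-up, which fills in the standard regularity argument for the Cayley digraph and delegates the gadget-replacement step to \cite{bw}, is in the same spirit and is the right way to justify the statement. One genuine slip to fix: with the paper's convention that there is an $s$-labelled arc $g_1 \to g_2$ iff $sg_1 = g_2$, the graph automorphisms are the \emph{right} multiplications $g \mapsto gh$, not the left multiplications. Your displayed identity $hg_1 \to hsg_1 = s(hg_1)$ requires $hs = sh$ and fails precisely in the non-abelian setting this paper cares about (hamiltonian groups), whereas $g_1h \to sg_1h = s(g_1h)$ works for every $h$. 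Reassuringly, your converse direction actually lands on $\phi(g) = g\,\phi(1)$, i.e.\ right multiplication by $\phi(1)$, so the two halves of your argument are consistent once the first half is restated with right translations; the regular (free and transitive) action, and hence the conclusion that any single vertex is a fixing set, is unaffected. Your flagging of the trivial-group edge case is a reasonable extra precaution that the paper does not bother with.
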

	
	The above lemma ensures that for every vertex $v$ of a Frucht graph or a Cayley digraph, $\stab(v)$ is trivial. Consequently, for a finite non-trivial group $G$, $1 \in \fix(G)$. 	
	
	\section{Value of $\alpha(H)$}
	This section is devoted to computing the value of $\alpha(H)$. From the discussions in the previous section, graph having automorphism group isomorphic to the hamiltonian group $H\cong Q_8\times A$ must have at least $8+\mu(A)$ vertices. In the following theorem, we find $\alpha(H)$, where $H$ is a hamiltonian 2-group.
\begin{figure}[h]
	\centering
	\includegraphics[width=0.75\linewidth]{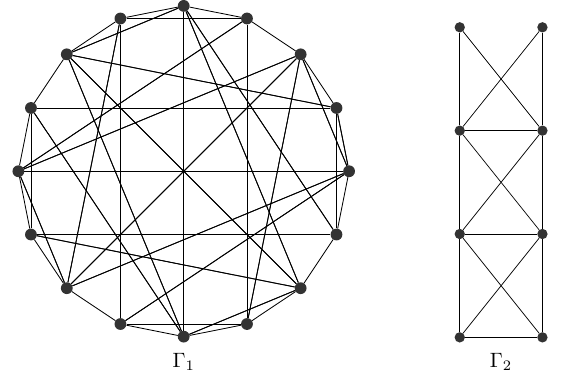}
	\caption{Vertex-minimal graphs with $\Aut \Gamma_{1} = Q_{8}$ and $\Aut \Gamma_{2} = C_{2}^{4}$.}
	\label{fig:mainfig1}
\end{figure}
	\begin{theorem}
		\label{ham_th1}
		Let $k$ be any positive integer. Then  $\alpha(Q_{8} \times C_{2}^{k}) = 16 + 2k$.
	\end{theorem}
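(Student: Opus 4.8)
The plan is to establish the lower bound $\alpha(Q_8 \times C_2^k) \geq 16 + 2k$ and then exhibit a graph on exactly $16 + 2k$ vertices whose automorphism group is $Q_8 \times C_2^k$. For the lower bound, suppose $\Gamma$ is a graph with $\Aut\Gamma \cong Q_8 \times C_2^k$, and consider the induced action of this group on $V(\Gamma)$. The key structural fact is that $Q_8$ must act on some orbit in a way that does not commute with the full action — more precisely, I would argue that the $Q_8$ factor needs at least $16$ vertices to realize $Q_8$ (not merely embed it, but have it be the \emph{full} automorphism group of the relevant part), using the known equality $\alpha(Q_8) = 16$ quoted in the excerpt, while the $C_2^k$ factor needs an additional $2k$ vertices because $\mu(C_2^k) = \alpha(C_2^k) = 2k$. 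The subtlety is that these two requirements might in principle be met by overlapping vertex sets, so the real work is showing the orbits supporting the two factors must be disjoint, or at least that any economy from overlap is impossible. I expect this disjointness/non-overlap argument to be the main obstacle: one must rule out a clever action where a single orbit simultaneously ``pays for'' part of the $Q_8$ and part of the $C_2^k$. The natural tool is to look at the kernels of the actions on individual orbits: since $Q_8$ has a unique minimal normal subgroup (the center $\langle\sigma^2\rangle$) while $C_2^k$ is elementary abelian, a careful orbit-by-orbit analysis of which normal subgroups act trivially, combined with Lemma~\ref{ham_lem1} (which forces any faithful $Q_8$-orbit to have size $\geq 8$, in fact the regular orbit structure pushes this to $16$ for the full automorphism group), should force the vertex count to split as $16 + 2k$.

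For the upper bound, I would construct $\Gamma$ explicitly as a disjoint-union-like combination of the vertex-minimal graph $\Gamma_1$ for $Q_8$ (on $16$ vertices, as in Figure~\ref{fig1}) and the vertex-minimal graph $\Gamma_2'$ for $C_2^k$ (on $2k$ vertices). A plain disjoint union does not work because its automorphism group would be a direct product only if the two graphs share no isomorphic connected components and no component-swapping occurs; moreover one must ensure no \emph{extra} automorphisms arise from the interaction. The standard device is to attach the two pieces via a rigid connector — for instance, pick a vertex (or a canonically distinguished orbit) in each piece and join them through a path or a small asymmetric gadget whose own structure is fixed pointwise by both $\Aut\Gamma_1$ and $\Aut\Gamma_2'$, so that $\Aut\Gamma = \Aut\Gamma_1 \times \Aut\Gamma_2' \cong Q_8 \times C_2^k$. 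I would need to verify (i) the connector adds no new automorphisms, (ii) it breaks no existing ones, and (iii) it adds no vertices beyond what is already counted, or if it does, that the count still comes out to $16 + 2k$ — which suggests the connector should reuse existing vertices rather than introduce fresh ones, e.g.\ by adding edges between a fixed vertex of $\Gamma_1$ and vertices of $\Gamma_2'$ chosen so as to be compatible with all of $\Aut\Gamma_2'$ but with none of its outer symmetries.

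The verification of the upper bound is mostly routine graph-automorphism bookkeeping once the gadget is chosen correctly; the genuine difficulty concentrates in the lower bound, specifically in proving that the $16$ vertices needed for $Q_8$ and the $2k$ needed for $C_2^k$ cannot be shared. A clean way to organize this: decompose $V(\Gamma)$ into $\Aut\Gamma$-orbits $O_1, \dots, O_m$; for each orbit let $N_i \trianglelefteq Q_8 \times C_2^k$ be the kernel of the action on $O_i$; since the action on $V(\Gamma)$ is faithful, $\bigcap_i N_i = 1$. Because $Q_8 \times C_2^k$ has center $\langle\sigma^2\rangle \times C_2^k$ and a single minimal normal subgroup inside the $Q_8$ part, one can classify each orbit by whether $N_i$ contains $\sigma^2$. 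The orbits with $\sigma^2 \notin N_i$ must carry a faithful action of a group containing $Q_8$'s center nontrivially, and summing their sizes using Lemma~\ref{ham_lem1} and $\alpha(Q_8)=16$ gives $\geq 16$; the remaining orbits, on which $Q_8$ acts through its abelianization $C_2^2$, must collectively realize enough of $C_2^k$ (together with the $C_2^2$-images) that the total is $\geq 2k$ more — and the point is that no orbit can be double-counted because an orbit either sees the center of $Q_8$ or it does not. Pinning down this dichotomy precisely, and handling the interaction between the $C_2^2 = Q_8/Z(Q_8)$ quotient and the genuine $C_2^k$ factor, is where I would spend the most care.
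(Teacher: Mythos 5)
Your upper bound is essentially the paper's (the paper simply takes the disjoint union of the vertex-minimal graphs for $Q_8$ and $C_2^k$, which already yields $\alpha(H)\le 16+2k$; your worry about component-swapping is moot here since the two pieces have no isomorphic components, and no connector gadget is needed). The problem lies in your lower bound, and it is a genuine gap rather than a fixable detail. Your plan is to classify orbits by whether $\sigma^2$ lies in the kernel of the action, and then to claim that the orbits on which $\sigma^2$ acts nontrivially must total at least $16$ vertices ``using Lemma~\ref{ham_lem1} and $\alpha(Q_8)=16$.'' But $\alpha(Q_8)=16$ is a statement about graphs whose \emph{full} automorphism group is $Q_8$; it says nothing about the size of a union of orbits inside a larger graph on which $Q_8$ happens to act faithfully. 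Since every nontrivial normal subgroup of $Q_8$ contains $\sigma^2$, an orbit not killing $\sigma^2$ carries a faithful $Q_8$-action, and Lemma~\ref{ham_lem1} only forces such an orbit to have size at least $8$ --- for instance a single regular orbit of size $8$ is perfectly possible as a permutation action. Your own phrase ``the regular orbit structure pushes this to $16$'' is precisely the assertion that needs proof, and nothing in your outline supplies it. As written, your argument delivers only $\alpha(H)\ge 8+2k$, which the paper already gets for free from $\mu(H)=8+2k$.

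The paper closes this gap by a different and more hands-on mechanism: for each candidate vertex count $8+2k$, $10+2k$, $12+2k$, $14+2k$, it fixes the (essentially unique) cycle structure of the generators $\sigma,\tau,\delta_1,\dots,\delta_k$ on that many points, lists every possible edge orbit of the group action, and exhibits an explicit extra involution $\gamma$ (e.g.\ $\gamma=(u_1,u_3)(u_2,u_4)(a_{11},a_{12})\cdots(a_{k1},a_{k2})$ on $8+2k$ points) which maps each edge orbit to itself because on every individual edge it agrees with some element of the group. Hence $\gamma$ preserves any graph invariant under $Q_8\times C_2^k$, so the full automorphism group of any such graph is strictly larger than $Q_8\times C_2^k$ --- a contradiction. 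If you want to complete your kernel-dichotomy framework, you would have to reproduce an argument of this kind for the sub-$16$ faithful $Q_8$-parts (supports of size $8$, $10$, $12$, $14$), including the configurations where the support of $\sigma$ overlaps transposition points of the $\delta_l$; the orbit-kernel bookkeeping alone cannot rule these out.
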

	
	\begin{proof}
		Let $H=Q_8\times C_2^k$. First we observe that $\alpha(H)\geq\mu(H)=8+2k$. Again, since $\alpha(Q_8)=16$ and $\alpha(C_2^k)=2k$, it follows that $8 + 2k \leq \alpha(H) \leq 16 + 2k$. We will show that there does not exist any graph with vertex less than $16+2k$ with hamiltonian $2$-group symmetry. It is sufficient to show that $\alpha(H)\neq 8+2k, 10+2k, 12+2k, 14+2k$.\\
		First, if possible, let $\Gamma_{1}$ be a graph with $8+2k$ vertices with the automorphism group $H$. Without loss of generality we take
		\begin{align*}
			\Aut \Gamma_{1} &= \langle \sigma, \tau, \delta_{1}, \delta_{2}, \ldots, \delta_{k} \rangle \\
			&= \langle (u_{1}, u_{2}, u_{3}, u_{4})(v_{1}, v_{2}, v_{3}, v_{4}), (u_{1}, v_{3}, u_{3}, v_{1})(u_{2}, v_{2}, u_{4}, v_{4}), (a_{11}, a_{12}), \ldots, (a_{k1}, a_{k2})\rangle .
		\end{align*}
		
		Then this group has $2(2^{k} -1)+1$ elements involutions. Take $S$ be the set of all involutions. We claim that $\gamma = (u_{1}, u_{3})(u_{2}, u_{4})(a_{11}, a_{12})(a_{21}, a_{22})\ldots(a_{k1}, a_{k2})$ is also an element of $\Aut \Gamma_{1} \setminus S$, which is absurd.\\
		For $\Gamma_1$, we have the following possible edge orbits:
		\begin{align*}
			&\{[u_{i}, u_{j}]\},\ 
			\{[u_{i}, v_{j}]\},\ 
			\{[v_{i}, v_{j}]\},\\
			&\{[u_{i}, a_{l1}]\mid \forall \ l =1,2,\ldots,k\}, \
			\{[u_{i}, a_{l2}]\mid \forall \ l =1,2,\ldots,k\},\\ 
			& \{[v_{i}, a_{l1}]\mid \forall \ l =1,2,\ldots,k\},\	
			\{[v_{i}, a_{l2}]\mid \forall \ l =1,2,\ldots,k\},\\
			& \{[a_{l1}, a_{m2}]\mid \forall \ l,m =1,2,\ldots,k\}.
		\end{align*}	 
		Now, observe that
		\begin{align*}
			&\gamma([u_{i}, u_{j}]) = \sigma^{2}([u_{i}, u_{j}]),\\
			&\gamma([u_{i}, v_{j}]) = \sigma^{i+j}\tau ([u_{i}, v_{j}]),\\
			&\gamma([u_{i}, a_{l1}]) = \sigma^{2} \delta_{1} \delta_{2} \ldots \delta_{k} ([u_{i}, a_{l1}]) = \gamma([u_{i}, a_{l2}]),\\
			&\gamma([v_{i}, v_{j}]) = e,\\
			&\gamma([v_{i}, a_{l1}]) = \delta_{1} \delta_{2} \ldots \delta_{k} ([v_{i}, a_{l1}]) = \gamma([v_{i}, a_{l2}]),\\
			&\gamma([a_{l1}, a_{m2}]) = \delta_{1} \delta_{2} \ldots \delta_{k} ([a_{l1}, a_{m2}]).
		\end{align*}
		From the above calculations it follows that, for each edge $[u,v]\in E(\Gamma_1)$, an automorphism $\beta$ of $\Gamma_1$ exists such that $\gamma([u,v]) = \beta ([u,v])$. Which implies $\gamma\in \Aut \Gamma_1$. Therefore, $\alpha(H)\neq 8+2k$.

		%%%%%%%%%%%%%
		%%%%%%%%%%%%

		%Next suppose that there exist a graph $\Gamma_{2}$ with  $ 12 + 2k$ number of vertices such that $\Aut \Gamma_{2} \cong Q_{8} \times C_{2}^{k}$. From Lemma $5$, $\sigma$ can't have three cycles. \\
		Next we suppose there exist a graph $\Gamma_{2}$ with $\Aut \Gamma_{2} = Q_{8} \times C_{2}^{k}$ with $8+2k+2$ number of vertices. Two cases arise:
		\begin{itemize}
			\item [(a)] If $\sigma = (u_{1}, u_{2}, u_{3}, u_{4})(v_{1}, v_{2}, v_{3}, v_{4})(b_{1}, b_{2})$ and $\delta_{l} = (a_{l1}, a_{l2})$ for each $l \in \{1,2,\ldots,k\}$.
			We claim  that  $\gamma = (u_{1}, u_{3}) (u_{2}, u_{4})(a_{11}, a_{12})\ldots(a_{k1}, a_{k2}) \in \Aut \Gamma_{2} \setminus S$, which is a contradiction.\\
			We have following possible edge orbits:
			\begin{align*}
				& \{[u_{i}, u_{j}]\},\
				\{[u_{i}, v_{j}]\},\
				\{[v_{i}, v_{j}]\},\\
				&\{[u_{i}, b_{1}]\},\
				\{[v_{i}, b_{1}]\},\
				\{[b_{1}, b_{2}]\},\\
				&\{[u_{i}, a_{l1}], \forall \quad l =1,2,\ldots,k\},\
				\{[u_{i}, a_{l2}], \forall \quad l =1,2,\ldots,k\},\\	
				&\{[v_{i}, a_{l1}], \forall \quad l =1,2,\ldots,k\},\
				\{[v_{i}, a_{l2}], \forall \quad l =1,2,\ldots,k\},\\
				&\{[a_{l1}, a_{m2}], \forall \quad l,m =1,2,\ldots,k\},\
				\{[b_{1}, a_{l1}], \forall \quad l =1,2,\ldots,k\},\\
				&\{[b_{1}, a_{l2}], \forall \quad l =1,2,\ldots,k\}.		
			\end{align*}
			
			% $\{[u_{i}, u_{j}]\}$, $\{[u_{i}, v_{j}]\}$, $\{[u_{i}, a_{l1}], \forall \quad l =1,2,\ldots,k\}$, $\{[u_{i}, a_{l2}], \forall \quad l =1,2,\ldots,k\}$, $\{[v_{i}, v_{j}]\}$, $\{\{[v_{i}, a_{l1}]\}, \forall \quad l =1,2,\ldots,k\}$,\\
			%	$\{[v_{i}, a_{l2}], \forall \quad l =1,2,\ldots,k\}$, $\{[a_{l1}, a_{m2}], \forall \quad l,m =1,2,\ldots,k\}$, $\{[u_{i}, b_{1}]\}$, $\{[v_{i}, b_{1}]\}$,\\
			%	 $\{[b_{1}, a_{l1}], \forall \quad l =1,2,\ldots,k\}$, $\{[b_{1}, a_{l2}], \forall \quad l =1,2,\ldots,k\}$,  $\{[b_{1}, b_{2}]\}$.    \\
			Observe that 
			\begin{align*}
				&\gamma([u_{i}, u_{j}]) = \sigma^{2}([u_{i}, u_{j}]) = \gamma([u_{i}, b_{1}]) = \gamma([v_{i}, b_{1}]) , \\
				& \gamma([u_{i}, v_{j}]) = \sigma^{i+j}\tau ([u_{i}, v_{j}]), \\
				&\gamma([u_{i}, a_{l1}]) = \sigma^{2} \delta_{1} \delta_{2} \ldots \delta_{k} ([u_{i}, a_{l1}]) = \gamma([u_{i}, a_{l2}]),\\
				&\gamma([v_{i}, v_{j}]) = e = \gamma([b_{1}, b_{2}]),\\
				&\gamma([v_{i}, a_{l1}]) = \delta_{1} \delta_{2} \ldots \delta_{k} ([v_{i}, a_{l1}]) = \gamma([v_{i}, a_{l2}]), \\
				&\gamma([a_{l1}, a_{m2}]) = \delta_{1} \delta_{2} \ldots \delta_{k} ([a_{l1}, a_{m2}]) = \gamma([b_{1}, a_{l1}]) = \gamma([b_{1}, a_{l2}]) .\\
			\end{align*}
			Hence $\gamma \in \Aut \Gamma_{2} \setminus S$, which is a contradiction.
			
			\item [(b)]   If $\sigma = (u_{1}, u_{2}, u_{3}, u_{4})(v_{1}, v_{2}, v_{3}, v_{4})$ and $\delta_{1} = (a_{11}, a_{12})(b_{1}, b_{2})$  and for each $l \in \{2,\ldots,k\}$ $\delta_{l} = (a_{l1}, a_{l2})$. Then in a similar way as above, we observe that\\
			$\gamma = (u_{1}, u_{3}) (u_{2}, u_{4})(a_{11}, a_{12})\ldots(a_{k1}, a_{k2})(b_{1}, b_{2}) \in \Aut \Gamma_{2} \setminus S$, a contradiction.
		\end{itemize}

		In similar ways, we observe that there does not exist any graph that has automorphism group isomorphic to $Q_{8} \times C_{2}^{k}$ with an $8+2k+4$ or $8+2k+6$ number of vertices.    
	\end{proof}

	\begin{theorem}
		\label{ham_main1}
		Let $H$ be a hamiltonian group of the form $Q_{8} \times A$, where $A$ is some periodic abelian group that has no element of order $4$. Then $\alpha(Q_{8} \times A) = 16 + \alpha(A)$.
	\end{theorem}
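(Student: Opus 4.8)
The plan is to prove the two inequalities $\alpha(Q_8\times A)\le 16+\alpha(A)$ and $\alpha(Q_8\times A)\ge 16+\alpha(A)$ separately, reducing as much as possible to Theorem~\ref{ham_th1} and to Arlinghaus's description of vertex-minimal graphs for finite abelian groups. First I would invoke the structure theorem for finite abelian groups: since $A$ has no element of order $4$, its Sylow $2$-subgroup is elementary abelian, so $A\cong C_2^{m}\times D$ with $|D|$ odd, and hence $Q_8\times A\cong (Q_8\times C_2^{m})\times D$. The upper bound is the easy direction. I would take a vertex-minimal graph $\Delta$ with $\Aut\Delta\cong Q_8$ (on $16$ vertices) and a vertex-minimal graph $\Delta'$ with $\Aut\Delta'\cong A$ (on $\alpha(A)$ vertices), and consider the disjoint union $\Delta\sqcup\Delta'$. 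A short argument shows $\Delta$ is connected: if it were not, then, since $Q_8$ is directly indecomposable and has a unique involution, exactly one component $C$ of $\Delta$ would carry all of the symmetry, with $\Aut C\cong Q_8$ and $|V(C)|\ge\alpha(Q_8)=16$; but then $\Delta$ would have at least $17$ vertices, contradicting $|V(\Delta)|=16$. Since $\Aut\Delta$ is non-abelian, $\Delta$ is isomorphic to no connected component of $\Delta'$, so $\Aut(\Delta\sqcup\Delta')\cong\Aut\Delta\times\Aut\Delta'\cong Q_8\times A$ and $\alpha(Q_8\times A)\le 16+\alpha(A)$.

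For the lower bound, let $\Gamma$ be an arbitrary graph with $\Aut\Gamma\cong Q_8\times A$, and let $N=Q_8\times\{1\}$ be the direct factor centralized by $A$, with centre $Z=\langle\sigma^2\rangle$. Since every subgroup of $Q_8$ is normal in $Q_8$ and $A$ centralizes $N$, each set $V_L=\{v\in V(\Gamma):\stab_N(v)=L\}$ (for $L\le Q_8$) is invariant under all of $\Aut\Gamma$; in particular the set $R$ of vertices moved by $N$ and the set $F=\fix(N)$ are $\Aut\Gamma$-invariant and partition $V(\Gamma)$. I would then aim to show $|R|\ge 16$ and $|F|\ge\alpha(A)$, which together give $|V(\Gamma)|\ge 16+\alpha(A)$. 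For $|R|\ge 16$: $N$ acts faithfully on $R$, and because $Z$ is the unique minimal normal subgroup of $Q_8$, the stabilizers $\stab_N(v)$ ($v\in R$) intersect trivially only if one of them is already trivial; hence $R$ contains a regular $N$-orbit of size $8$, so $|R|\ge 8$. If $8\le|R|\le 14$, the $N$-orbit pattern on $R$ lies on a short list — a regular orbit together with at most three orbits of size $2$ or $4$ — and in each case I would carry out the same edge-orbit analysis as in the proof of Theorem~\ref{ham_th1}, with the transpositions $\delta_\ell$ there replaced by the action of $A$ (which commutes with $N$), to produce an automorphism $\gamma$ of $\Gamma$ — a ``half rotation'' of the regular orbit, corrected by a suitable element of $A$ on $F$ — lying outside $Q_8\times A$, a contradiction; so $|R|\ge 16$.

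The main obstacle is the inequality $|F|\ge\alpha(A)$, which asserts that the $Q_8$-fixed part of $\Gamma$ essentially contains a full vertex-minimal graph for $A$. Two points require care. First, that $A$ acts faithfully on $F$ when $\Gamma$ is vertex-minimal: an element $a\in A$ fixing $F$ pointwise must, since $|a|$ is not divisible by $4$, act on every regular $N$-orbit either trivially or as the central involution $\sigma^2\in N$, and this redundant action can be removed without altering $\Aut\Gamma$, which forces $a=1$. Second, and more delicately, that the structure supported on $F$ genuinely requires $\alpha(A)$ vertices: the induced subgraph $\Gamma[F]$ by itself may have far more automorphisms than $A$, so one must retain, for each $v\in F$, the adjacency pattern between $v$ and $R$ and absorb it into a graph gadget supported on $F$, obtaining a graph on $|F|$ vertices whose automorphism group is exactly $A$. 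Making this last step rigorous is where I expect the real work to lie; it is here that Theorem~\ref{ham_th1} (for the elementary abelian $2$-part $C_2^m$) and the explicit form of Arlinghaus's vertex-minimal abelian graphs (for the odd part $D$) would be used. Together with $|R|\ge 16$ this yields $\alpha(Q_8\times A)\ge 16+\alpha(A)$, completing the proof.
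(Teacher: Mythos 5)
Your overall strategy --- separate $V(\Gamma)$ into the part $R$ moved by $N=Q_8\times\{1\}$ and the part $F$ fixed pointwise by $N$, then count the two parts separately --- is in the same spirit as the paper's lower-bound argument, which phrases the separation as $\supp(\sigma)\cap\supp(a)=\emptyset$ for each generator $a$ of $A$, handling even-order generators via Theorem~\ref{ham_th1} and odd-order ones via the observation that an odd-order permutation commuting with $\sigma$ but meeting its support would force $\sigma^2a\neq a\sigma^2$. Your upper-bound argument is correct (and more careful than the paper's about why the disjoint union has the right automorphism group), and your derivation of a regular $N$-orbit in $R$ from the fact that every nontrivial subgroup of $Q_8$ contains $\langle\sigma^2\rangle$ is sound.

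There are, however, two genuine gaps, both of which you yourself flag. First, $|R|\ge 16$ is only asserted: you say you would ``carry out the same edge-orbit analysis as in Theorem~\ref{ham_th1},'' but in the present setting the correcting factor of $\gamma$ must come from an arbitrary abelian $A$ acting simultaneously on $F$ and on the non-regular $N$-orbits of $R$, not from $C_2^k$ acting by disjoint transpositions, so the construction of the extra automorphism is not a routine transcription and is not actually carried out. Second, and more seriously, the inequality $|F|\ge\alpha(A)$ --- which is the entire content of the lower bound once the supports are separated --- is never proved. Faithfulness of the $A$-action on $F$ (itself only sketched, and not handling non-regular orbits in $R$) yields only $|F|\ge\mu(A)$, and $\mu$ can be far smaller than $\alpha$: the paper records $\mu(Q_8)=8<16=\alpha(Q_8)$, and for abelian groups one has, e.g., $\mu(C_3\times C_5)=8$ while $\alpha(C_3\times C_5)=21$. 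Your proposed fix --- absorbing the adjacency pattern between $F$ and $R$ into a gadget ``supported on $F$,'' yielding a graph on exactly $|F|$ vertices with automorphism group $A$ --- is not something that can be done in general without adding vertices, since it amounts to encoding a vertex colouring of $F$ into an uncoloured graph on the same vertex set; no construction is given. You explicitly defer this step as ``where the real work lies,'' so the proposal must be judged incomplete at its central point. (The paper's own treatment of this step is admittedly very terse, but it does not leave the claim open in the way your write-up does.)
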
	
	\begin{proof}
		From the disjoint union of vertex minimal graphs of $Q_{8}$ and $A$, we have $\alpha(H) \leq 16 + \alpha(A)$. Suppose $Q_{8}$ be generated by $\sigma$ and $\tau$. Let $1 \neq a \in H$ be the generator of $A$ such that $a \notin \langle \sigma, \tau \rangle$. Our claim is that with less than $16 +\alpha(A)$ number of vertices, there exist no graph with $H$-symmetry, that is, $\supp(\sigma) \cap \supp(a) = \phi$.\medskip

		Towards a contradiction, suppose there exists a graph $\Gamma$ such that $\Aut (\Gamma) \cong Q_{8} \times A$ with $|V(\Gamma)| < 16 + \alpha(A)$. Two cases arise here: \medskip\\		
		\textbf{Case I:} If order of element $a$ is even. Then, clearly it follows from Theorem \ref{ham_th1} it follows  that $\supp(\sigma) \cap \supp(a) = \phi$.\medskip\\
		\textbf{Case II:} If order of element $a$ is odd. Let $A = \mathbb{Z}_{p_{1}^{\alpha_{1}}} \times \mathbb{Z}_{p_{2}^{\alpha_{2}}} \times \ldots \times \mathbb{Z}_{p_{n}^{\alpha_{n}}} $ be an abelian group of odd order, where $p_{i}$'s are odd and $\alpha_{i} \geq 1$ for all $i = 1,2, \ldots, n$. Let $\Gamma_{1}$ be a vertex-minimal graph of $A$. Then the cycle decomposition of the generators of \(\Aut \Gamma_{1}\) will follow the structure of the vertex- minimal graph. Suppose $\supp(a) \cap \supp(\sigma) \neq \phi$. Then, we have $\sigma^{2} a \neq a \sigma^{2}$, a contradiction.
	\end{proof}
	
	For $G_1=Q_8$ and an abelian group $G_2$ without any element of order $4$, we have proved that  
	$$\alpha(G_{1} \times G_{2}) = \alpha(G_{1}) + \alpha(G_{2}).$$ 
	However, the above equality does not hold for arbitrary non-trivial groups $G_1$ and $G_2$. For example (see \cite{wca} for details), $\alpha(C_3\times C_5)=\alpha(C_3)+\alpha(C_5)-3$. So the following obvious problem arises:
	\begin{problem}
		Classify all non-trivial finite groups $G_1$ and $G_2$, such that $\alpha(G_{1} \times G_{2}) = \alpha(G_{1}) + \alpha(G_{2}).$
	\end{problem}

	\section{Fixing set of hamiltonian group}
	In this section, we determine $\fix(H)$. Let $A$ be a finite abelian group with its unique factorization. Then elementary divisors of $A$ are defined as the set of all prime power orders of cyclic groups.
	%	\begin{lemma}
		%	Let $\Gamma$ be a graph with fixing number 1. If $v$ is a vertex of $\Gamma$ such that $\{v\}\neq O(v)$, then $\{v\}$ is a fixing set of $\Gamma$.
		%	\end{lemma}
	%	\begin{proof}
		%	Let $G=\Aut \Gamma$ and $u$ be a vertex of $\Gamma$ with trivial stabilizer (such a vertex exists since $\Gamma$ has fixing number 1). According to the given condition, an automorphism \(\pi\in G\) exists for which \(\pi(v)\neq v\). If possible, let $\{v\}$ not be a fixing set of $\Gamma$. We first claim that no automorphism of $\Gamma$ moves $v$ to $u$. If not, then there exists $\tau\in G$ such that $\tau(u)=v$. Then 
		%	$$\stab(v)=\{\sigma\in G\mid \sigma(v)=v\}=\{\sigma\in G\mid \sigma(\tau(u))=\tau(u)\}=\{1\}.$$
		%	Which is a contradiction. This also implies that there does not exist any automorphism of $\Gamma$ that maps a vertex $u$ to a vertex in $O(v)$. Thus there exists an automorphism $\sigma'$ of $\Gamma$ such that $\supp(\sigma')\subseteq O(v)$. This contradicts $\{u\}$ being a fixed set of $\Gamma$. Hence $\{v\}$ is a fixing set of $\Gamma$.
		%	\end{proof}
	
	%	An extension of the above result is not possible if $\Gamma$ has a fixing number greater than 1, i.e., if $\Gamma$ is a graph with fixing number $k>1$ and a set of vertices $S=\{v_1,v_2\ldots,v_k\}$ such that $\{v_i\}\neq O(v_i)$, then $S$ is not necessarily a fixing set of $\Gamma$. 
\begin{figure}[h]
	\centering
	\includegraphics[width=0.85\linewidth]{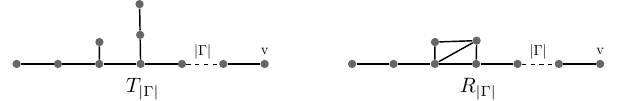}
	\caption{Graphs with identity automorphism}
	\label{fig:mainfig}
\end{figure}
	
	In the subsequent theorem, we demonstrate a natural extension of Theorem \ref{fixing_abelian}. We use a similar technique here. The theorem also provides a partial solution to Problem \ref{fix_op1}. For a positive integer $k$, let $I_k$ denote the set $\{1,2,\ldots, k\}$.
	\begin{theorem}
		Let $G_{1}$ and $G_2$ be two finite groups such that $\fix(G_{1})=\{1\}$ and $\fix(G_2)=I_d$, then $$\fix(G_1 \times G_2)=I_{d+1}.$$
	\end{theorem}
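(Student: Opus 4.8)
The plan is to prove the two inclusions $\fix(G_1\times G_2)\supseteq I_{d+1}$ and $\fix(G_1\times G_2)\subseteq I_{d+1}$ separately; the first is short and the second carries the content. For the lower inclusion, note first that $G_1\times G_2$ is a non-trivial finite group, so Lemma~\ref{ham_lm2} (applied to a Cayley digraph or a Frucht graph of $G_1\times G_2$) gives $1\in\fix(G_1\times G_2)$. For the remaining values, the sumset is $\fix(G_1)+\fix(G_2)=\{1\}+I_d=\{\,1+j:1\le j\le d\,\}=\{2,3,\dots,d+1\}$, so Lemma~\ref{fixing_lm1} yields $\{2,\dots,d+1\}\subseteq\fix(G_1\times G_2)$. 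Together these give $I_{d+1}\subseteq\fix(G_1\times G_2)$.

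For the reverse inclusion it suffices to show that every graph $\Gamma$ with $\Aut\Gamma\cong G_1\times G_2$ has fixing number at most $d+1$. Write $G=\Aut\Gamma$ and regard $G_1=G_1\times\{1\}$ and $G_2=\{1\}\times G_2$ as subgroups of $G$, each acting faithfully on $V(\Gamma)$. I would use two structural facts. \emph{(i)} The hypothesis $\fix(G_1)=\{1\}$ forces $G_1$ to have a unique minimal subgroup $M_1$ (a group with two distinct subgroups of prime order admits a graph realizing fixing number at least $2$), and this property is inherited by every subgroup $L\le G_1$; consequently, whenever such an $L$ acts faithfully on a finite set, some point has trivial stabilizer — otherwise every point stabilizer would contain $M_1$, putting $M_1$ in the kernel. \emph{(ii)} The hypothesis $\fix(G_2)=I_d$ bounds the base size of any faithful $G_2$-action by $d$, i.e.\ there is a vertex subset of size at most $d$ with trivial pointwise $G_2$-stabilizer. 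Granting these, pick $S_2\subseteq V(\Gamma)$ with $|S_2|\le d$ and $\stab_{G_2}(S_2)=1$, and set $L=\stab_G(S_2)$. Then $L\cap G_2=\stab_{G_2}(S_2)=1$, so the projection $G\to G_1$ is injective on $L$, making $L$ isomorphic to a subgroup of $G_1$. If $L=1$ we are done; otherwise \emph{(i)} supplies $v\in V(\Gamma)$ with $\stab_L(v)=1$, and then $\stab_G(S_2\cup\{v\})=L\cap\stab_G(v)=\stab_L(v)=1$. Hence $S_2\cup\{v\}$ is a fixing set of size at most $d+1$, so $\fix(G_1\times G_2)\subseteq I_{d+1}$.

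The main obstacle is fact \emph{(ii)}: one must see that $\fix(G_2)=I_d$ controls the base size of an \emph{arbitrary} faithful action of $G_2$, not merely of those actions realized by graphs whose automorphism group equals $G_2$. When $G_2$ is abelian — the case needed for hamiltonian groups, where $G_2=A$ — this is precisely the mechanism behind the upper bound in Theorem~\ref{fixing_abelian}: point stabilizers are constant on each orbit, so it suffices to intersect the distinct orbit-stabilizers down to the trivial subgroup, and $d$ of them suffice because $A$ has $d$ elementary divisors. For a general $G_2$ one would reduce an arbitrary faithful action to a graph with automorphism group exactly $G_2$ by attaching mutually distinguishable rigid gadgets to the orbits of the action without decreasing its base size; making this precise is the delicate point. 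Fact \emph{(i)} is comparatively routine, but it still requires the observation that two distinct minimal subgroups are incompatible with $\fix=\{1\}$.
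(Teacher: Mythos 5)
Your lower inclusion $I_{d+1}\subseteq\fix(G_1\times G_2)$ is exactly the paper's argument (Lemmas \ref{fixing_lm1} and \ref{ham_lm2}) and is fine. For the upper bound you work in the opposite order from the paper --- first a set $S_2$ of size at most $d$ killing $G_2$, then one vertex for the residual subgroup $L$ isomorphic to a subgroup of $G_1$ --- and you correctly flag that this rests on two unproven facts; unfortunately those two facts are the entire content of the upper bound, so the proof is not complete. Fact \emph{(ii)} is the serious one. As you observe, $\fix(G_2)=I_d$ only constrains graphs whose \emph{full} automorphism group is $G_2$, and by itself gives no base-size bound for the restriction to $G_2$ of the action of $\Aut\Gamma=G_1\times G_2$ on $V(\Gamma)$. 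The paper's device for bridging exactly this kind of gap is the gadget construction you gesture at but do not carry out: it first finds a vertex $v_1$ with trivial $G_1$-stabilizer and attaches a rigid graph ($R_{|\Gamma|}$ or $T_{|\Gamma|}$, chosen so that the gadget contains a cycle precisely when the component of $v_1$ does not, to rule out reflections) at $v_1$, producing an honest graph $\Gamma'$ with $\Aut\Gamma'\cong\stab_G(v_1)$, a subgroup of $G_2$; a fixing set of $\Gamma'$ of size at most $d$ together with $v_1$ then fixes $\Gamma$. Doing the $G_1$-factor first is what makes a single one-vertex gadget suffice; in your order, after fixing $S_2$ there is no graph at hand whose automorphism group is the residual group, so the reduction you still owe is harder, not easier.

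Fact \emph{(i)} is also a genuine gap as written: the parenthetical claim that two distinct prime-order subgroups of $G_1$ force $2\in\fix(G_1)$ requires constructing a suitable graph and in effect amounts to classifying the groups with fixing set $\{1\}$ (cyclic of prime-power order or generalized quaternion); the ``otherwise the unique minimal subgroup lies in the kernel'' step is fine, but only once uniqueness is established. The paper sidesteps this by directly asserting, from $\fix(G_1)=\{1\}$, the existence of a vertex $v_1$ on whose orbit $G_1$ acts regularly. (To be fair, the paper's own step ``the fixing number of $\Gamma'$ is bounded above by $d$'' also silently uses that every subgroup of $G_2$ has all fixing numbers at most $d$, which is clear only in the intended abelian application; both arguments lean on unstated reductions, but yours leaves the central one, \emph{(ii)}, entirely open.)
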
	
	\begin{proof}
		Let $G= G_{1} \times G_{2}$. By Lemma \ref{fixing_lm1} and \ref{ham_lm2}, it follows that $$I_{d+1} \subseteq \fix(G_{1} \times G_{2}).$$
		
		Consider a finite graph $\Gamma$ having $\Aut \Gamma = G_{1} \times G_{2}$. Let us consider the subgroup $G_{1}$ of $G$. Since $\fix(G_{1}) = \{1\}$, there exist a vertex $v_{1}$ such that $G_{1}$ acts regularly on $O(v_{1})$.
		
		% Let $p$ be a prime divisor of $|G_1|$. Cauchy's theorem therefore shows that there exist an element of $G_1$ of order $p$. Suppose that $\sigma \in \Aut \Gamma$ be an element of order $p$. Consequently, there must be at least one $p$-cycle in the cycle decomposition of $\sigma$. Hence, given $i\in\{1,2,\ldots,p\}$, there exist vertices $v_1,v_2,\dots,v_{p}$ in $\Gamma$ such that $\sigma=(v_{1},\ldots,v_p)\sigma'$, where $v_i\notin\supp(\sigma')$.
		
		Consider the vertex $v_1$ and let $\Gamma_{1}$ be the connected component of $\Gamma$ that contains $v_{1}$. Then either $\Gamma_{1}$ is a tree or it contains a cycle. If $\Gamma_{1}$ is a tree, then the graph $\Gamma^{'}$ is constructed by $\Gamma$ by joining the graph $R_{|\Gamma|}$ as illustrated in Figure 2 to $\Gamma$. If $\Gamma_1$ is not a tree, then we construct $\Gamma'$ from $\Gamma$ by attaching the graph $T_{|\Gamma|}$ to the vertex $v_{1}$ belogs to $\Gamma$. Let us denote the subgraph $R_{|\Gamma|}$ or $T_{|\Gamma|}$ in $\Gamma^{'}$ by $\Gamma_{2}$. Our claim is that $\Aut(\Gamma^{'})$ is a subgroup of $G_{2}$. To prove our claim, we first show that $\Gamma^{'}$ has no additional automorphism that $\Gamma$ does not. If possible, let $\tau$ be an automorphism of $\Gamma^{'}$ that is not an automorphism of $\Gamma$. Then there must exist a vertex $v$ in $\Gamma_2$ such that $\tau(v)\neq v$. Since $\Aut \Gamma_2=\{1\}$, every vertex in $\Gamma_2$ must be in $\supp (\tau)$. Again, since $\Gamma'$ contains more vertices than $\Gamma$, the image of a vertex of $\Gamma_{2}$ under the permutation $\tau$ must be another vertex of $\Gamma_{2}$. Therefore $\tau$ is completely determined as a permutation on the vertices $\Gamma_{1} \cup \Gamma_{2}$. In fact, $\tau$ is a reflection of $\Gamma_{1} \cup \Gamma_{2}$ about some vertex of $\Gamma_{2}$. Which seems impossible, since we have constructed $\Gamma'$ in a way that $\Gamma_{2}$ contains a cycle if and only if $\Gamma_{1}$ does not.
		
		On the other hand, we can not extend any automorphism of $\Gamma$ as an automorphism of $\Gamma'$, as in the graph $\Gamma'$, the vertex $v_1$ has a larger degree than all the vertices of $O(v_{1})$; therefore, $v_1$ cannot be mapped to any vertex of $O(v_{1})$. And this proves our claim.
		
		Hence, the fixing number of $\Gamma'$ is bounded above by $d$. Let $S$ be a fixing set of $\Gamma'$ where $|S|\in I_d$.	Let $S' = S \cup \{v_{1}\}$. Then $|S'|\in I_{d+1}$ and the set $S'$ serves as a fixing set of $\Gamma$. Therefore $\fix(G) = \{1,2,\ldots,d+1\}$. \end{proof} 
	In \cite{lklgchh}, Christina and Lauderdale proved that $\fix(Q_8)=\{1\}$. Thus, by using Theorem \ref{fixing_abelian}, we obtain the following corollary that describes fixing set of a hamiltonian group.

	\begin{corollary}
		\label{ham_main3}
		Let $H$ be a hamiltonian group of the form $Q_{8} \times A$, where $A$ is periodic abelian group that has no element of order $4$. Let $d$ denote the number of elementary divisors of $A$. Then the set $\fix(H) = \{1,2,\ldots,d+1\}$.
	\end{corollary}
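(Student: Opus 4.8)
The goal is to prove $\fix(H) = I_{d+1}$ for $H \cong Q_8 \times A$ with $A$ periodic abelian having no element of order $4$ and $d$ elementary divisors. The plan is to invoke the general product theorem just proved (the one immediately preceding this corollary), taking $G_1 = Q_8$ and $G_2 = A$. To do that I need to verify the two hypotheses of that theorem: that $\fix(Q_8) = \{1\}$ and that $\fix(A) = I_d$.

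First I would recall that $\fix(Q_8) = \{1\}$ is exactly the result of Christina and Lauderdale \cite{lklgchh}, so this hypothesis is already established in the literature and nothing further is needed. Second, I would apply Theorem \ref{fixing_abelian} (Gibbons--Laison) to the finite abelian group $A$: since $A$ has $d$ elementary divisors, Theorem \ref{fixing_abelian} gives $\fix(A) = I_d$. (Here I should note that ``elementary divisors'' in the statement of Theorem \ref{fixing_abelian} is phrased as ``elementary divisors,'' which for a finite abelian group is the multiset of prime-power orders of the cyclic factors in its primary decomposition; this matches the definition given at the start of Section 4, so the count $d$ is unambiguous.) One mild point to address is that $A$ is described as ``periodic abelian'' rather than ``finite abelian''; but a hamiltonian group is finite (indeed the whole paper works with finite groups), so $A$ is in fact a finite abelian group and Theorem \ref{fixing_abelian} applies directly.

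With both hypotheses in hand, the product theorem immediately yields $\fix(Q_8 \times A) = I_{(d)+1} = I_{d+1} = \{1, 2, \ldots, d+1\}$, which is exactly the claim. So the entire argument is a two-line deduction: cite \cite{lklgchh} for $\fix(Q_8) = \{1\}$, cite Theorem \ref{fixing_abelian} for $\fix(A) = I_d$, then apply the preceding theorem.

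Since this is a corollary that chains together three already-proved results, there is essentially no obstacle — the only thing requiring a moment's care is making sure the notion of ``number of elementary divisors'' used in Theorem \ref{fixing_abelian} agrees with the $d$ in the corollary's hypothesis, and confirming the order-$4$ restriction on $A$ is what guarantees $H = Q_8 \times A$ is a genuine hamiltonian group (via Theorem 1.1) rather than something with extra $Q_8$-like structure; but that restriction plays no role in the $\fix$ computation itself, only in making $H$ the group we claim it is.
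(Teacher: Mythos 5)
Your proposal is correct and is exactly the argument the paper intends: the corollary is deduced by citing Christina--Lauderdale for $\fix(Q_8)=\{1\}$, Theorem \ref{fixing_abelian} for $\fix(A)=I_d$, and then applying the preceding product theorem with $G_1=Q_8$ and $G_2=A$. Your additional remarks on the meaning of ``elementary divisors'' and on finiteness of $A$ are sound but not needed beyond what the paper already assumes.
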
	
	
	\section{Base size set of hamiltonian group}
	%	Let $n$ be positive integer with $n \geq 3$. Define the generalized quaternion group of order $2^{n}$ as,\\
%	$Q_{2^{n}} = \langle r,f : r^{2^{n-1}} = 1 = f^{4}, frf^{-1} = r^{-1}, r^{2^{n-2}} = f^{2} \rangle$. \\
%	Observe that every element in $Q_{2^{n}} \setminus \langle r \rangle$ is of order four and $r^{2^{n-2}} = f^{2}$ is the only element of order $2$. 
In this section, we first discuss the base size sets of the generalized quaternion groups, and then using the results for the quaternion group, we study the base size sets of hamiltonian groups. In \cite{lklgchh}, authors studied the vertex-minimal graph having automorphism group $Q_{2^{n}}$. They stated a result for the cycle decomposition of $Q_{2^{n}}$. 
	\begin{lemma} \cite{lklgchh}
		\label{cycle_quaternion}
		Consider $Q_{2^{n}}$ as a permutation group, and suppose $r$ and $f$ are the generators of $Q_{2^{n}}$ as defined in Equation \ref{quaternion_equation}. Then the cycle decomposition of $r$ comprises at least two $2^{n-1}$-cycles and the cycle decomposition of $f$ comprises at least $2^{n-2}$ $4$-cycles.
	\end{lemma}
	
Using above Lemma \ref{cycle_quaternion}, authors in \cite{lklgchh} studied the orbit size of an element in $Q_{2^{n}}$. The result is as follows.
	\begin{lemma} \cite{lklgchh}
		\label{cycle_quaternion1}
		Consider $Q_{2^{n}}$ as a permutation group, and suppose $r$ and $f$ are the generators of $Q_{2^{n}}$ as defined in Equation \ref{quaternion_equation}. There exists an element $a$ in  $\supp(r)$ such that $|O(a)| = 2^{n}$.
	\end{lemma}
	%It has been proved that the there exists an element $a$ in $\supp(r)$ whose orbit in $Q_{2^{n}}$ has size $2^{n}$. Hence $\stab(a)$ is identity. Thus a result follows immediately.
	Thus using Lemma \ref{cycle_quaternion} and Lemma
	\ref{cycle_quaternion1}, a result follows immediately.
	
	\begin{theorem}
		\label{base_quaternion}
		Let $n \geq 3$ be positive integer. Then $\mathcal{B}(Q_{2^{n}}) = \{1\}$.
	\end{theorem} 
	\begin{proof}
		Suppose $\tau$ be faithful representation of $Q_{2^{n}}$ on set $S$. Let $r$ and $f$ be generators of $Q_{2^{n}}$. Using Orbit-Stabilizer Theorem and Lemma \ref{cycle_quaternion1}, we have
		\begin{equation*}
			|Q_{2^{n}}| = |O(a)|\cdot |\stab(a)| = 2^{n} \cdot |\stab(a)|.
		\end{equation*} 
		It implies that $|\stab(a)| = 1$. Hence $\mathcal{B}(Q_{2^{n}}) = \{1\}$.
	\end{proof}
	Thus we show that the base size sets and the fixing sets of generalized quaternion groups are equal.
	\begin{remark}
		\label{stab_rem}
		For a finite group $G$, let $\tau$ be a faithful representation on set $S$. Let $g \in G$ be an element of order $p^{k}$, then $\tau(g)$ is a permutation of order $p^{k}$. Thus the cycle decomposition of $\tau(g)$ has atleast one cycle of length $p^{k}$. Then there exists $x \in S$ suct that $|O(x)| \geq p^{k}$. Thus $|\stab(x)| \leq |G| /p^{k}$.
	\end{remark}
	Using Lemma \ref{base_len} and Corollary \ref{ham_main3}, for hamiltonian group $H$, we have $\{1,2,\ldots,d+1\} =\fix(H) \subseteq \mathcal{B}(H)$. However, we give a complete constructive proof to the following main result for finite hamiltonian group.

	\begin{theorem}
		\label{base_hamiltonian}
		Let $H$ be a hamiltonian group of the form $Q_{8} \times \mathcal{A}$, where $\mathcal{A}$ is some periodic abelian group that has no element of order $4$. Let $d$ denote the number of elementary divisors of $\mathcal{A}$. Then $\mathcal{B}(H) = \{1,2,\ldots,d+1\}$. 
	\end{theorem}
	\begin{proof}
	Let $Q_{8}$ and $\mathcal{A}$ be permutation groups on disjoint sets X and Y respectively. Suppose the direct product $Q_{8} \times \mathcal{A}$ has a permutation representation $\tau : Q_{8} \times \mathcal{A} \to  Sym(X \cup Y)$. Then $b_{\tau}(Q_{8} \times \mathcal{A}) = b_{\tau}(Q_{8}) + b_{\tau}(\mathcal{A})$. Suppose $S$ is a generating set of $Q_{8} \times \mathcal{A}$. Then by Lemma \ref{ham_lm2}, we have a Frucht graph $F(H,S)$ such that its automorphism group is $H$ itself. Thus there exist a vertex $v$  of $F(H,S)$ such that $\stab(v)$ is trivial. Therefore, $1 \in \mathcal{B}(Q_{8} \times \mathcal{A})$. Hence we conclude that $\{1,2,\ldots,d+1\} \subseteq \mathcal{B}(H)$.\medskip
		
		We now prove that $\mathcal{B}(H) \subseteq \{1,2,\ldots,d+1\}$. For this, we use the induction hypothesis on number of elementary divisors $d$ of the abelian group $\mathcal{A}$. First, consider $d =1$, i.e., $H = Q_{8} \times \mathbb{Z}_{p^{k}}$. For a finite set $S$, suppose $\tau : H \to Sym(S)$ is a faithful representation. Then for an element $g \in H$ of order $p^{k}$, there exists an element $x_{1} \in S$ such that $|O(x_{1})| \geq p^{k}$ and $|\stab(x_{1})| \leq 8$. This means $\stab(x_{1})$ is isomorphic to one of the groups: identity group, $\mathbb{Z}_{2}$, $\mathbb{Z}_{4}$ or $Q_{8}$. \medskip
			
			If $\stab(x_{1})$ is trivial,  then $\mathcal{B}(H) = \{1\}$.\medskip
			
			If $\stab(x_{1})$ is isomorphic to $\mathbb{Z}_{2}$ or $\mathbb{Z}_{4}$, then using Remark \ref{stab_rem}, there exists an element $x_{2} \in S$ from the induced representation of $\stab(x_{1})$ such that $\stab(x_{1},x_{2}) = \{1\}$.\medskip 
				
			Finally, if $\stab(x_{1}) \cong Q_{8}$, then by Theorem \ref{base_quaternion}, there exists an element $x_{2} \in S$ from the induced representation of $\stab(x_{1})$ such that $\stab(x_{1},x_{2}) = \{1\}$. Hence $\mathcal{B}(H) = \{1,2\}$.\medskip
		
		Now assume that the statement holds for all $l < d$, i.e., if $\mathcal{A}$ has $l$ elementary divisors, then the base size set for $H$ satisfies $\mathcal{B}(H) \subseteq \{1,2,\ldots,l+1\}$. Let $\mathcal{A}$ has $d$ elementary divisors. Then $H$ can be represented as
		\[ H \cong Q_{8} \times \mathbb{Z}_{p_{1}^{k_{1}}} \times \mathbb{Z}_{p_{2}^{k_{2}}} \times \ldots \times \mathbb{Z}_{p_{d}^{k_{d}}}.\]
		Now for each elementary divisor of $\mathcal{A}$, there is an element $g_{i}$ of $H$ of order $p_{i}^{k_{i}}$.\medskip
		
		Consider the subgroup $H_{1} \cong \mathbb{Z}_{p_{1}^{k_{1}}}$ of $H$ with a generator $g_{1}$. Let $\tau$ be faithful representation of $H$. Then the cycle decomposition of $\tau(g_{1})$ contains atleast one $p_{1}^{k_{1}}$-cycle, say $(x_{1},x_{2},\ldots,x_{p_{1}^{k_{1}}})$. Since for all $1 \leq r < p_{1}^{k_{1}}$, $g_{1}^{r}(x_{1}) \neq x_{1}$, it follows $H_{1} \cap \stab (x_{1}) = \{1\}$. Thus, by second and third isomorphism theorem, we have 
		\[\frac{\stab(x_{1})H_{1}}{H_{1}} \cong \frac{\stab(x_{1})}{H_{1} \cap \stab(x_{1})}\]
		or, \[\stab(x_{1}) \cong \frac{\stab(x_{1})H_{1}}{H_{1}} \leq \frac{H}{H_{1}}.\] 
		
		Therefore, it follows that \[\stab(x_{1}) \cong Q_{8} \times \mathbb{Z}_{p_{2}^{k_{2}}} \times \mathbb{Z}_{p_{3}^{k_{3}}} \times \ldots \times \mathbb{Z}_{p_{d}^{k_{d}}}.\] Hence $\stab(x_{1})$ has at most $d-1$ elementary divisors. Thus, by induction hypothesis, we have $\mathcal{B}(\stab(x_{1})) \subseteq \{1,2,\ldots,d\}$. If $B$ is a base of induced representation of $\stab(x_{1})$, then $B \cup \{x_{1}\}$ is a base of the permutation representation $\tau$ of $H$. Hence we prove that $\mathcal{B}(H) \subseteq \{1,2,\ldots,d+1\}$.    
		\end{proof}
	
	\begin{remark}By Corollary \ref{ham_main3} and Theorem \ref{base_hamiltonian} it follows immediately that $\mathcal{B}(H) = \fix(H)$.
	\end{remark}
	
	\subsection*{Data availability}
	No data were used in this article.
	\subsection*{Acknowledgements}
	Kirti Sahu is thankful to the CSIR for financial support (Grant No- 09/0983(16243)/2023-EMR-I).	
	
	\subsection*{Ethics declarations}
	\textbf{Conflict of interest:} The authors declare that they have no conflict of interest.

\end{document}